\newcommand{\rsts}[2]{\genfrac{\{}{\}}{0pt}{0}{#1}{#2}}
\theoremstyle{plain}
\newtheorem{theorem}{Theorem}
\newtheorem{corollary}[theorem]{Corollary}
\theoremstyle{definition}
\theoremstyle{remark}
\newtheorem{remark}[theorem]{Remark}
\begin{document}

\begin{center}
\vskip 1cm{\LARGE\bf Integral Formulas for the Noncentral Tanny-Dowling Polynomials}
\vskip 1cm
\large    
Mahid M. Mangontarum$^1$, Norlailah M. Madid$^2$ and Asnawi A. Campong$^3$\\
Department of Mathematics\\
Mindanao State University-Main Campus\\
Marawi City 9700\\
Philippines \\
$^1$\href{mailto:mmangontarum@yahoo.com}{\tt mmangontarum@yahoo.com} \\
$^1$\href{mailto:mangontarum.mahid@msumain.edu.ph}{\tt mangontarum.mahid@msumain.edu.ph} \\
$^2$\href{mailto:norlailah.madid@msumain.edu.ph}{\tt norlailah.madid@msumain.edu.ph}\\
$^3$\href{mailto:campong.aa82@s.msumain.edu.ph}{\tt campong.aa82@s.msumain.edu.ph}
\end{center}

\begin{abstract}
	In this paper, we established some integral formulas for and involving the noncentral Tanny-Dowling polynomials. These formulas are shown to be generalizations of some known results on the classical geometric polynomials.
	
	\medskip
	
	\noindent\textbf{Keywords:} Geometric polynomial, exponential polynomial, noncentral Tanny-Dowling polynomial, noncentral Dowling polynomial
	
	\medskip
	
	\noindent\textbf{2020 MSC:} 11B83, 11B73 
	
\end{abstract}

\section{Introduction}

Let $\rsts{n}{k}$ denote the Stirling numbers of the second kind \cite{Stirling}. In the classical distribution problems, $\rsts{n}{k}$ count the number of ways to distribute $n$ distinct objects into $k$ identical boxes such that no box is empty \cite[pg. 47]{Chen}. These numbers also appear as coefficients in the expansion of 
\begin{equation}
	x^n=\sum_{k=0}^n\rsts{n}{k}(x)_k,
\end{equation}
where $(x)_k=x(x-1)(x-2)\cdots(x-k+1)$ \cite{Comtet}. In relation to this, it is easy to show that the Stirling numbers of the second kind multiplied by the appropriate factorial $k!\rsts{n}{k}$ counts the number of ways to distribute $n$ distinct objects to $k$ distinct boxes such that no box is empty. On the other hand, the geometric polynomials (also known as Fubini Polynomials) \cite{Tanny}, denoted by $w_{n}(x)$, are defined to be the $n$th degree polynomial whose coefficients are $k!\rsts{n}{k}$. That is, 
\begin{equation}
	w_{n}(x)=\sum_{k=0}^{n}k!\rsts{n}{k}x^{k}. \label{eq:geopoly}
\end{equation}
These polynomials are known to satisfy the exponential generating function given by \cite[Eq. 3.14]{Boyadzhiev}
\begin{equation}
	\sum_{n=0}^{\infty}w_{n}(x)\frac{z^{n}}{n!}=\frac{1}{1-x(e^z-1)}.\label{eq:geopoly-egf}
\end{equation}
The case when $x=1$ given by 
\begin{equation}
	w_{n}:=w_{n}(1)=\sum_{k=0}^{n}k!\rsts{n}{k}\textcolor{red}{x^{k}}
\end{equation}
is called geometric numbers (of Fubini numbers). \textcolor{red}{These count all the possible set partitions of an $n$ element set such that the order of the blocks matters}. The exponential generating function of $w_n$ can be easily by setting $x=1$ in \eqref{eq:geopoly-egf}. That is,
\begin{equation}
	\sum_{n=0}^{\infty}w_n(1)\frac{z^n}{n!}:=\sum_{n=0}^{\infty}w_n\frac{z^n}{n!}=\frac{1}{2-e^z}.
\end{equation}

The study of geometric polynomials became a thrend for some mathematicians even up to this date. For instance, Kellner \cite{Kellner} established a number of identities involving the polynomials $w_n(x)$. Among these identities is the integral identity over the interval $[-1,0]$ given by
\begin{equation}
	\int_{-1}^{0} w_{n}(x)dx=B_{n}. \label{eq:Kellner'sid}
\end{equation}
Here, $B_{n}$ denotes the $n^{th}$ Bernoulli number defined by the exponential generating function 
\begin{equation}
	\sum_{n=0}^{\infty}B_{n}\frac{x^{k}}{n!}=\frac{x}{e^{x}-1}. \label{eq:bernoullinum}
\end{equation}
The proof of \eqref{eq:Kellner'sid} uses Worpitzky's identity \cite[pg. 215 (36)]{Worpitzky} given by
\begin{equation}
	B_n=\sum_{k=0}^{n}\sum_{j=0}^{k}(-1)^j\binom{k}{j}\frac{j^n}{k+1} \label{eq:bernoulli-exp}
\end{equation}
and its equivalent form
\begin{equation}
	B_{n}=\sum_{k=1}^{n}{(-1)}^{k}\frac{k!}{k+1}\rsts{n}{k}.\label{eq:bernoulli-exp2}
\end{equation}
Boyadzhiev \cite{Boyadzhiev} established transformation formulas for the geometric polynomials. In his paper, given the exponential polynomials (Bell polynomials) $\phi_{n}(x)$ defined by
\begin{equation}
	\phi_{n}(x)=\sum_{k=0}^{n}\rsts{n}{k}x^{k},
\end{equation}
Boyadzhiev \cite{Boyadzhiev} expressed the polynomials $w_{n}(x)$ in terms of the exponential polynomials, viz.
\begin{equation}
	w_{n}(x)=\int_{0}^{\infty} \phi_{n}(x\lambda)e^{-\lambda}d\lambda. \label{eq:geoexpo-rel}
\end{equation}
This was used to derive more properties for $w_{n}(x)$ including the exponential generating function \cite[Eq. (3.13)]{Boyadzhiev}
\begin{equation}
	\int_{0}^{\infty}e^{-\lambda(1-x(e^{x}-1))}d\lambda=\sum_{n=0}^{\infty}w_{n}(x)\frac{z^{n}}{n!}. \label{eq:geoexpo-egf}
\end{equation}
Other notable works are due to Karg{\i}n \cite{Kargin}, Dil and Kurt \cite{Dil1}, Boyadzhiev and Dil \cite{Boyad2}, Karg{\i}n and \c{C}ekim \cite{Kargin2}, and the references therein.

In 2016, Mangontarum et al. \cite{MCM-R} introduced the noncentral Tanny-Dowling polynomials $\widetilde{F}_{m,a}(n;x)$ defined by
\begin{equation}
	\widetilde{F}_{m,a}(n;x)=\sum_{k=0}^{n}k!\widetilde{W}_{m,a}(n,k)x^{k} \label{eq:noncentralTDpoly}
\end{equation}
and satisfying the exponential generating function
\begin{equation}
	\sum_{n=k}^{\infty}\widetilde{F}_{m,a}(n;x)\frac{z^{n}}{n!}=\frac{me^{-az}}{m-x(e^{mz}-1)}, \label{eq:noncentralTDpoly-egf}
\end{equation}
where the numbers $\widetilde{W}_{m,a}(n,k)$ are the noncentral Whitney numbers of the second kind, a generalization of $\rsts{n}{k}$. Further, in a recent paper by Mangontarum and Madid \cite{MM}, a number of identities for $\widetilde{F}_{m,a}(n;x)$ are established. Such identities are shown to generalizes some known results on the geometric polynomials, including the ones in the paper of Karg{\i}n \cite{Kargin}.

In this paper, we will establish integral formulas for and involving the noncentral Tanny-Dowling polynomials. In particulay, we will derive a generalization of Kellner's \cite{Kellner} integral formula relating the noncentral Tanny-Dowling polynomials with the Bernoulli polynomials, obtain generalizations of the Worpitzky's \cite{Worpitzky} explicit formulas in terms of noncentral Whitney numbers of the second kind, and derive a generalizations of Boyadzhiev's \cite{Boyadzhiev} identities for the noncentral Tanny-Dowling polynomials.

\section{Results and Discussions}

\begin{theorem}\label{Result1}
	For any real numbers $a$ and positive integer $m$, the following integral formula over the interval $[-1,0]$ holds:
	\begin{equation}
		\int_{-1}^{0}\widetilde{F}_{m,a}(n;mx)dx=m^nB_n\left(\frac{-a}{m}\right)
		\label{eq:Tanny-dowling1}
	\end{equation} 
\end{theorem}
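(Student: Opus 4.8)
The plan is to prove the formula by comparing exponential generating functions. First I would form the EGF of the left-hand side, namely $\sum_{n\ge 0}\left(\int_{-1}^{0}\widetilde{F}_{m,a}(n;mx)\,dx\right)\frac{z^n}{n!}$, and interchange the summation with the integration over $x$. This reduces the problem to integrating the EGF of $\widetilde{F}_{m,a}(n;mx)$, which by \eqref{eq:noncentralTDpoly-egf} (after replacing $x$ by $mx$) equals $\frac{me^{-az}}{m-mx(e^{mz}-1)}=\frac{e^{-az}}{1-x(e^{mz}-1)}$.

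Next I would evaluate $\int_{-1}^{0}\frac{dx}{1-x(e^{mz}-1)}$. Writing $u=e^{mz}-1$, an antiderivative is $-\frac{1}{u}\ln(1-xu)$, and evaluating from $-1$ to $0$ gives $\frac{\ln(1+u)}{u}=\frac{mz}{e^{mz}-1}$, since $1+u=e^{mz}$. Thus the EGF of the left-hand side collapses to $\frac{mz\,e^{-az}}{e^{mz}-1}$.

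Finally I would recognize this as the EGF of the right-hand side. Using the standard generating function $\sum_{n\ge 0}B_n(y)\frac{t^n}{n!}=\frac{t\,e^{yt}}{e^t-1}$ for the Bernoulli polynomials, the substitution $t=mz$ and $y=-a/m$ yields $\sum_{n\ge 0}m^n B_n\!\left(\frac{-a}{m}\right)\frac{z^n}{n!}=\frac{mz\,e^{-az}}{e^{mz}-1}$. Comparing the coefficients of $z^n/n!$ on both sides then gives \eqref{eq:Tanny-dowling1}.

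I expect the main technical point to be the bookkeeping around the singularity at $z=0$, where $u=e^{mz}-1$ vanishes: the logarithmic antiderivative must be treated as a removable singularity (both $\frac{\ln(1+u)}{u}$ and $\frac{mz}{e^{mz}-1}$ tend to $1$), and the interchange of sum and integral must be justified on a neighborhood of $z=0$ where the geometric series $\sum_k x^k(e^{mz}-1)^k$ converges uniformly for $x\in[-1,0]$. As an alternative that avoids analytic subtleties, one may argue termwise: expanding $\widetilde{F}_{m,a}(n;mx)=\sum_k k!\,\widetilde{W}_{m,a}(n,k)(mx)^k$, using $\int_{-1}^{0}x^k\,dx=\frac{(-1)^k}{k+1}$ together with the explicit formula $\widetilde{W}_{m,a}(n,k)=\frac{1}{m^k k!}\sum_{j=0}^{k}(-1)^{k-j}\binom{k}{j}(mj-a)^n$ extracted from \eqref{eq:noncentralTDpoly-egf}, and then invoking the Worpitzky-type expansion $B_n(y)=\sum_{k=0}^{n}\frac{1}{k+1}\sum_{j=0}^{k}(-1)^j\binom{k}{j}(y+j)^n$ with $y=-a/m$, which generalizes \eqref{eq:bernoulli-exp}.
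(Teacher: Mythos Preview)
Your proposal is correct and follows essentially the same route as the paper: form the exponential generating function of the left-hand side via \eqref{eq:noncentralTDpoly-egf}, evaluate the resulting $x$-integral to obtain $\dfrac{mz\,e^{-az}}{e^{mz}-1}$, recognize this as the generating function of $m^nB_n(-a/m)$, and compare coefficients. Your additional remarks on the removable singularity at $z=0$ and the alternative termwise argument are not needed for the core proof but are sound, and the latter in fact anticipates the paper's Corollary~\ref{Result2} and its remark.
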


\begin{proof}
Note that from \eqref{eq:noncentralTDpoly-egf}, we have
	\begin{equation}
		\sum_{n=0}^{\infty}\int_{-1}^{0}\widetilde{F}_{m,a}(n;mx)\frac{z^n}{n!}dx=\int_{-1}^{0}\frac{e^{-az}}{1-xe^{mz}+x}dx. \label{eq:laila}
	\end{equation}
Evaluating the integral and by \eqref{eq:bernoullinum}, we get
	\begin{align*}
		\sum_{n=0}^{\infty}\int_{-1}^{0}\widetilde{F}_{m,a}(n;mx)\frac{z^n}{n!}dx &=\frac{-e^{-az}}{e^{mz}-1}\ln\left|1-xe^{mz}+x\right|\biggl|^{0}_{1}\\
		&=\frac{-e^{-az}}{e^{mz}-1}\left(-\ln\left|e^{mz}\right|\right) \\
		&=\frac{mze^{-az}}{e^{mz}-1}\\
		&=\sum_{n=0}^{\infty}m^nB_n\left(\frac{-a}{m}\right)\frac{z^n}{n!}.
	\end{align*}
Comparing the coefficients of $\frac{z^n}{n!}$ completes the proof.
\end{proof}

\begin{remark}\label{rema1} \rm 
	Since $\widetilde{F}_{1,0}(n;x)=w_n(x)$, then by setting $m=1$ and $a=0$ in Theorem \ref{Result1}, we get the integral
	\begin{equation*}
		\int_{-1}^{0}\widetilde{F}_{1,0}(n;x)dx = B_n(0)
	\end{equation*}
	which is Kellner's \cite{Kellner} identity in \eqref{eq:Kellner'sid}.
\end{remark}

Now, observe that from \eqref{eq:noncentralTDpoly}, 
\begin{align*}
	m^nB_n\left(\frac{-a}{m}\right) &= \int_{-1}^{0} \left(\sum_{k=0}^{n}m^kk!\widetilde{W}_{m,a}(n,k)x^{k}\right)dx \\
	&= \sum_{k=0}^{n}m^kk!\widetilde{W}_{m,a}(n,k)\int_{-1}^{0}x^{k}dx \\
	&= \sum_{k=0}^{n}m^kk!\widetilde{W}_{m,a}(n,k)\frac{(-1)^k}{k+1}.
\end{align*}
Thus, we have the following corollary:

\begin{corollary}\label{Result2}
	The $n^{\rm th}$ Bernoulli polynomial $B_n\left(\frac{-a}{m}\right)$ satisfies the following explicit formula:
	\begin{equation}
		B_n\left(\frac{-a}{m}\right)=\sum_{k=0}^{n}k!\widetilde{W}_{m,a}(n,k)\frac{(-1)^k}{m^{n-k}(k+1)}.\label{result2}
	\end{equation}
\end{corollary}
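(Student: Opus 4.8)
The plan is to combine Theorem \ref{Result1} with the explicit polynomial definition \eqref{eq:noncentralTDpoly} and integrate term by term. I would start from the identity already established, namely $\int_{-1}^{0}\widetilde{F}_{m,a}(n;mx)\,dx=m^{n}B_n\!\left(\frac{-a}{m}\right)$, and make the integrand explicit. Substituting the argument $mx$ into \eqref{eq:noncentralTDpoly} gives
\begin{equation*}
	\widetilde{F}_{m,a}(n;mx)=\sum_{k=0}^{n}k!\,\widetilde{W}_{m,a}(n,k)(mx)^{k}=\sum_{k=0}^{n}m^{k}k!\,\widetilde{W}_{m,a}(n,k)x^{k},
\end{equation*}
so the task reduces to integrating a finite polynomial in $x$ over $[-1,0]$.

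Next, because the sum is finite, I would interchange summation and integration without any convergence concern and evaluate each elementary integral $\int_{-1}^{0}x^{k}\,dx=\frac{(-1)^{k}}{k+1}$. Carrying the constants $m^{k}k!\,\widetilde{W}_{m,a}(n,k)$ outside the integral then yields
\begin{equation*}
	m^{n}B_n\!\left(\frac{-a}{m}\right)=\sum_{k=0}^{n}m^{k}k!\,\widetilde{W}_{m,a}(n,k)\frac{(-1)^{k}}{k+1}.
\end{equation*}
Dividing both sides by $m^{n}$, which is legitimate since $m$ is a positive integer, and absorbing the factor $m^{k}/m^{n}=m^{-(n-k)}$ into the summand produces exactly \eqref{result2}.

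The argument is essentially bookkeeping, so I do not expect a genuine obstacle; the only point that demands care is tracking the powers of $m$. The rescaling $x\mapsto mx$ injects a factor $m^{k}$ inside the sum, and this must be reconciled with the external factor $m^{n}$ from Theorem \ref{Result1} so that the final denominator comes out as $m^{n-k}$ rather than $m^{n}$ or $m^{n+k}$. I would likewise double-check the orientation of the interval, since the sign $(-1)^{k}$ emerges from evaluating $x^{k+1}/(k+1)$ at the lower limit $x=-1$; confirming that $\int_{-1}^{0}x^{k}\,dx=(-1)^{k}/(k+1)$ and not its negative is the kind of small slip worth verifying explicitly before declaring the proof complete.
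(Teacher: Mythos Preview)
Your proposal is correct and essentially identical to the paper's own argument: the paper likewise starts from Theorem~\ref{Result1}, expands $\widetilde{F}_{m,a}(n;mx)$ via \eqref{eq:noncentralTDpoly}, integrates $x^{k}$ term by term over $[-1,0]$ to obtain $(-1)^{k}/(k+1)$, and then divides through by $m^{n}$. There is nothing to add.
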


\begin{remark}\rm 
	Since $\widetilde{W}_{1,0}(n,k)=\rsts{n}{k}$, then when $m=1$ and $a=0$ in Corollary \ref{Result2}, we recover the Bernoulli formula in \eqref{eq:bernoulli-exp2}. Moreover, using the explicit formula of $\widetilde{W}_{m,a}(n,k)$ \cite[(38)]{MCM-R} given by
	\begin{equation*}
		\widetilde{W}_{m,a}(n,k)=\frac{1}{m^kk!}\sum_{j=0}^{k}\binom{k}{j}(-1)^{k-j}(mj-a)^{n},
	\end{equation*}
	equation \eqref{result2} can be written as
	\begin{equation}
		B_n\left(\frac{-a}{m}\right)=\sum_{j=0}^{n}\sum_{j=0}^{k}\binom{k}{j}(mj-a)^n\frac{(-1)^j}{m^n(k+1)}.
	\end{equation}
	This is a generalization of Worpitzky's \cite{Worpitzky} identity in \eqref{eq:bernoulli-exp}.
\end{remark}

Before proceeding, note that by induction on $k$, it is easy to show that
\begin{equation}
	\int_{0}^{\infty}x^ke^{-x}dx=k!.\label{lemma}
\end{equation}
Also, we recall the noncentral Dowling polynomials \cite[Eq. (89)]{MCM-R} defined by
\begin{equation*}
	\widetilde{D}_{m,a}(n;x)=\sum_{k=0}^{n}\widetilde{W}_{m,a}(n,k)x^k
\end{equation*}
which satisfies the exponential generating function \cite[Eq. (91)]{MCM-R}
\begin{equation}
	\sum_{n=0}^{\infty}\widetilde{D}_{m,a}(n;x)\frac{z^n}{n!}=e^{-az+(emz-a)(x/m)}.\label{DowlingExp}
\end{equation}

The next theorem expresses the noncentral Tanny-Dowling polynomias in terms of an integral of the noncentral Dowling polynomials.
\begin{theorem}\label{Result3}
	The noncentral Tanny-Dowling polynomials satisfy the following relation:
	\begin{equation}
		\widetilde{F}_{m,a}(n;x)=\int_{0}^{\infty}\widetilde{D}_{m,a}(n;x\lambda)e^{-\lambda}\ d\lambda. \label{eq:result3}
	\end{equation}
\end{theorem}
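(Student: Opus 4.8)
The plan is to verify the claimed identity by comparing the coefficients extracted from the exponential generating functions of both sides, using the elementary integral \eqref{lemma} to connect the two families of polynomials. The key observation is that substituting $x\lambda$ for $x$ in the definition of the noncentral Dowling polynomials turns each power $x^k$ into $x^k\lambda^k$, so the integral against $e^{-\lambda}$ acts term-by-term on the $\lambda$-variable while leaving the $x$-dependence untouched.

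First I would write out the right-hand side explicitly using the definition $\widetilde{D}_{m,a}(n;x\lambda)=\sum_{k=0}^{n}\widetilde{W}_{m,a}(n,k)(x\lambda)^k$. Interchanging the (finite) sum with the integral gives
\begin{equation*}
	\int_{0}^{\infty}\widetilde{D}_{m,a}(n;x\lambda)e^{-\lambda}\,d\lambda=\sum_{k=0}^{n}\widetilde{W}_{m,a}(n,k)x^k\int_{0}^{\infty}\lambda^k e^{-\lambda}\,d\lambda.
\end{equation*}
Next I would apply \eqref{lemma} to replace the inner integral by $k!$, yielding $\sum_{k=0}^{n}k!\,\widetilde{W}_{m,a}(n,k)x^k$, which is precisely the definition of $\widetilde{F}_{m,a}(n;x)$ in \eqref{eq:noncentralTDpoly}. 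This completes the argument directly, without ever invoking the generating functions.

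Alternatively, one could argue at the level of generating functions: multiply \eqref{eq:result3} by $z^n/n!$, sum over $n$, interchange the sum and the integral, and use \eqref{DowlingExp} to obtain $\int_{0}^{\infty}e^{-\lambda}e^{-az+(e^{mz}-a)(x\lambda/m)}\,d\lambda$; evaluating this elementary exponential integral in $\lambda$ should reproduce the generating function \eqref{eq:noncentralTDpoly-egf}. The term-by-term approach is cleaner, so I would present that as the main proof.

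The only point requiring care is the justification for interchanging summation and integration, but since the sum defining $\widetilde{D}_{m,a}(n;x\lambda)$ is finite (running from $k=0$ to $n$), the exchange is immediate and needs no convergence hypotheses. Thus there is no genuine obstacle here; the result is essentially a repackaging of the defining relation \eqref{eq:noncentralTDpoly} through the Gamma-integral \eqref{lemma}, and it specializes to Boyadzhiev's relation \eqref{eq:geoexpo-rel} when $m=1$ and $a=0$.
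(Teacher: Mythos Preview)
Your main argument is correct and matches the paper's proof essentially step for step: expand $\widetilde{D}_{m,a}(n;x\lambda)$ by its definition, interchange the finite sum with the integral, evaluate $\int_0^\infty \lambda^k e^{-\lambda}\,d\lambda=k!$ via \eqref{lemma}, and recognize \eqref{eq:noncentralTDpoly}. The generating-function alternative you sketch is not used in the paper's proof of this theorem (it appears instead as the proof of Theorem~\ref{result4}), but your primary proof is the same as the paper's.
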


\begin{proof}
By definition,
	\begin{align*}
		\int_{0}^{\infty} \widetilde{D}_{m,a}(n;x\lambda)e^{-\lambda}d\lambda &= \int_{0}^{\infty}\left[\sum_{k=0}^{n}\widetilde{W}_{m,a}(n,k)x^k\lambda^k\right] e^{-\lambda}d\lambda \\
		&= \sum_{k=0}^{n}\widetilde{W}_{m,a}(n,k)x^k\int_{0}^{\infty}\lambda^ke^{-\lambda}d\lambda.
	\end{align*}
Using \eqref{lemma} and then \eqref{eq:noncentralTDpoly} yield
	\begin{align*}
		\int_{0}^{\infty} \widetilde{D}_{m,a}(n;x\lambda)e^{-\lambda}d\lambda &= \sum_{k=0}^{n}k!\widetilde{W}_{m,a}(n,k)x^k\\
		&= \widetilde{F}_{m,a}(n;x)
	\end{align*}
which is the desired result.
\end{proof}

\begin{remark}\rm 
	Since $\widetilde{D}_{1,0}(n;x\lambda)=\phi_{n}(x\lambda)$, then when $m=1$ and $a=0$, the following relation 
	\begin{equation}
		\int_{0}^{\infty} \widetilde{D}_{1,0}(n;x\lambda)e^{-\lambda}\ d\lambda = \widetilde{F}_{1,0}(n;x)
	\end{equation} 
	is precisely Boyadzhiev's \cite{Boyadzhiev} formula in \eqref{eq:geoexpo-rel}.
\end{remark}

The next theorem presents another form of exponential generating function for the polynomials $\widetilde{F}_{m,a}(n;x)$.
\begin{theorem}\label{result4} 
	The exponential generating function of the noncentral Tanny-Dowling polynomial satisfies the following integral formula:
	\begin{equation}
		\sum_{n=0}^{\infty}\widetilde{F}_{m,a}(n;x)\frac{z^n}{n!}=\int_{0}^{\infty}exp\left[-az-\lambda(1-\frac{x}{m}(e^{mz}-1))\right]d\lambda.		\label{eq:result4}
	\end{equation}
\end{theorem}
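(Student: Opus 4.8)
The plan is to build directly on Theorem~\ref{Result3}, which already expresses $\widetilde{F}_{m,a}(n;x)$ as the Laplace-type integral of the noncentral Dowling polynomials. Multiplying both sides of \eqref{eq:result3} by $z^n/n!$ and summing over $n\ge 0$, I would bring the generating function on the left into contact with the generating function of the Dowling polynomials recorded in \eqref{DowlingExp}. Concretely, the first step is to write
\begin{equation*}
\sum_{n=0}^{\infty}\widetilde{F}_{m,a}(n;x)\frac{z^n}{n!}=\sum_{n=0}^{\infty}\left(\int_{0}^{\infty}\widetilde{D}_{m,a}(n;x\lambda)e^{-\lambda}\,d\lambda\right)\frac{z^n}{n!}.
\end{equation*}

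Next I would interchange the summation and the integration so that the sum acts only on the Dowling polynomials, producing $\int_{0}^{\infty}\big(\sum_{n}\widetilde{D}_{m,a}(n;x\lambda)\,z^n/n!\big)e^{-\lambda}\,d\lambda$. Applying \eqref{DowlingExp} with the argument $x$ replaced by $x\lambda$ collapses the inner sum to $\exp[-az+(e^{mz}-1)(x\lambda/m)]$. Substituting this and combining it with the factor $e^{-\lambda}$ gives the integrand $\exp[-az+\frac{x\lambda}{m}(e^{mz}-1)-\lambda]$, and factoring $\lambda$ out of the last two terms rewrites the exponent as $-az-\lambda\big(1-\frac{x}{m}(e^{mz}-1)\big)$, which is exactly the claimed right-hand side of \eqref{eq:result4}.

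The only delicate point is the legitimacy of exchanging the infinite sum with the improper integral; I expect this to be the main obstacle to a fully rigorous argument, and in the generating-function setting it is customarily treated formally. As an independent check that the resulting formula is correct, I would evaluate the integral on the right of \eqref{eq:result4} directly: pulling out $e^{-az}$ and integrating the elementary exponential $e^{-\lambda c}$ with $c=1-\frac{x}{m}(e^{mz}-1)$ yields $e^{-az}/c=\frac{m\,e^{-az}}{m-x(e^{mz}-1)}$, which coincides with the exponential generating function of $\widetilde{F}_{m,a}(n;x)$ in \eqref{eq:noncentralTDpoly-egf}. This double verification confirms the identity and exhibits it as the natural noncentral Tanny-Dowling analogue of Boyadzhiev's formula \eqref{eq:geoexpo-egf}.
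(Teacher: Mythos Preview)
Your proposal is correct and follows essentially the same route as the paper: start from Theorem~\ref{Result3}, multiply by $z^n/n!$, sum over $n$, swap the sum with the integral, and apply the Dowling generating function \eqref{DowlingExp}. Your added remark about the formal nature of the sum--integral interchange and the direct-evaluation check against \eqref{eq:noncentralTDpoly-egf} go slightly beyond what the paper records, but the core argument is identical.
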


\begin{proof}
	Multiplying both sides of \eqref{eq:result3} by $\frac{z^n}{n!}$ and summing over $n$ gives
	\begin{equation*}
		\sum_{n=0}^{\infty}\widetilde{F}_{m,a}(n;x)\frac{z^n}{n!}=\int_{0}^{\infty}\left(e^{-\lambda}\sum_{n=0}^{\infty}\widetilde{D}_{m,a}(n;x\lambda)\frac{z^n}{n!}\right)d\lambda.
	\end{equation*}
	By apply \eqref{DowlingExp} in the right-hand side,
	\begin{equation*}
		\sum_{n=0}^{\infty}\widetilde{F}_{m,a}(n;x)\frac{z^n}{n!}=\int_{0}^{\infty}e^{-az-\lambda(1-\frac{x}{m}(e^{mz}-1))}d\lambda.
	\end{equation*}
\end{proof}

\begin{remark}\rm 
	When $m=1$ and $a=0$, we obtain 
	\begin{equation}
		\int_{0}^{\infty}e^{-(0)z-\lambda(1-\frac{x}{1}(e^{(1)z}-1)}d\lambda = \sum_{n=0}^{\infty}\widetilde{F}_{1,0}(n;x)\frac{z^n}{n!},
	\end{equation}
	an equivalent representation of Boyadzhiev's \cite{Boyadzhiev} exponential generating function in \eqref{eq:geoexpo-egf}.
\end{remark}

\section*{Acknowledgment}
%The authors are thankful to the referees for giving comments and suggestions which greatly improved the paper. 

This research is funded by the Mathematical Society of the Philippines under the 2022 MSP Research Grants. It is also supported by the Mindanao State University by virtue of Special Order No. 624-OP, s. 2022.

\end{document}